\documentclass[11pt]{article}

\usepackage[margin=1.1in]{geometry}
\usepackage{amsmath,amssymb,amsthm,mathtools}
\usepackage{thm-restate}
\usepackage{bm}
\usepackage{enumitem}
\usepackage{microtype}
\usepackage[numbers]{natbib}
\usepackage{xcolor}
\usepackage[colorlinks=true,citecolor=blue!60!black,linkcolor=blue!60!black,urlcolor=blue!60!black]{hyperref}

\newtheorem{theorem}{Theorem}
\newtheorem{proposition}[theorem]{Proposition}
\newtheorem{lemma}[theorem]{Lemma}

\newtheorem{definition}[theorem]{Definition}

\newtheorem{question}{Question}
\newtheorem{remark}[theorem]{Remark}

\newcommand{\R}{\mathbb{R}}
\newcommand{\Q}{\mathbb{Q}}
\newcommand{\supp}{\operatorname{supp}}
\newcommand{\Crit}{\operatorname{Crit}}
\newcommand{\conv}{\operatorname{conv}}
\newcommand{\trdeg}{\operatorname{trdeg}}
\newcommand{\argmax}{\operatorname*{arg\,max}}

\title{On Finite Gaussian Mixtures: Finiteness of the Number of Modes and an Application to NPMLE}
\author{Haiyang Wang\thanks{Department of Applied and Computational Mathematics, Yale University, New Haven, CT 06511,
USA (e-mail: haiyang.wang1024@gmail.com)}}
\date{\today}

\begin{document}

\maketitle

\begin{abstract}
We prove that every isotropic Gaussian mixture with finitely many components has finitely many modes.
In one dimension, classical theory of Chebyshev systems  gives the sharp bound of at most \(n\) modes for an \(n\)-component mixture. In several dimensions, however, it has remained open whether every such mixture has finitely many modes.
Our main result is the stronger statement that the entire critical set has finite cardinality,
which is proven by combining real analytic curve selection theorem and Ax's functional-transcendence theorem. 

As an application, we show that, for Gaussian location mixtures, every nonparametric maximum likelihood estimator (NPMLE) based on a finite dataset is finitely supported.
More specifically, all NPMLEs share the same finite set of allowable atom locations.

\end{abstract}

\section{Introduction}
\label{sec:introduction}


For \(d\geq1\), given centers \(x_1,\ldots,x_n\in\R^d\) and positive weights \(w_1,\ldots,w_n\),  the \(n\)-component Gaussian mixture is  defined by
\begin{equation}
\label{eq:intro-mixture}
F(\theta)=\sum_{i=1}^{n}w_i\varphi_d(\theta-x_i),
\end{equation}
where \(\varphi_d(x)=(2\pi)^{-d/2}\exp\left(-\frac12\lVert x\rVert^2\right)\) is the density of \(\mathcal{N}(0,I_d)\). 

More generally, an \(n\)-component heteroscedastic Gaussian mixture allows a different covariance matrix \(\Sigma_i\) at each center \(x_i\), giving \(F(\theta)=\sum_{i=1}^{n}w_i\det(\Sigma_i)^{-1/2}\varphi_d(\Sigma_i^{-1/2}(\theta-x_i))\). A homoscedastic Gaussian mixture with covariance \(\Sigma\) is the special case in which \(\Sigma_i=\Sigma\) for every \(i\in[n]\). Under the linear change of variables \(x\mapsto\Sigma^{-1/2}x\), such a mixture is equivalent to the isotropic model in \eqref{eq:intro-mixture}. This paper focuses on the isotropic model, and our results readily generalize to homoscedastic Gaussian mixtures and may also extend to heteroscedastic Gaussian mixtures.

The modes of a finite Gaussian mixture \(F\) are key objects in the analysis of several applications and algorithms,
including nonparametric maximum likelihood estimation (NPMLE) and clustering with Gaussian kernels \cite{ghassabeh2015,kieferWolfowitz1956,lindsayRoeder1993,liRayLindsay2007,wallace2013,yanWangRigollet2024,sahaGuntuboyina2020}.
Central to these analyses is mode counting, which helps characterize the algorithmic difficulty and structural complexity of clustering and NPMLE problems.
\begin{question}
Does the finite Gaussian mixture \(F\) have a finite number of modes? If so, how many modes can it have?
\end{question}
We answer the first, qualitative question affirmatively by proving the stronger result that every finite Gaussian mixture \(F\) of the form \eqref{eq:intro-mixture} has only finitely many critical points.

\begin{restatable}{theorem}{maintheorem}
\label{thm:critical-finite}
For every \(d,n\geq1\), \(x_1,\ldots,x_n\in\R^d\), and \(w_1,\ldots,w_n>0\), let \(F(\theta)\) be the Gaussian mixture defined in \eqref{eq:intro-mixture}. Its set of critical points
\[
\Crit(F)\coloneqq\{\theta\in\R^d:\nabla F(\theta)=0\}
\]
has finite cardinality. Consequently, \(F\) has a finite number of modes.
\end{restatable}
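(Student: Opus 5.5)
We need to show that the critical set is finite. The abstract says the intended tools are real analytic curve selection and Ax's theorem, and the plan below follows that route.

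\textbf{Reduction to a bounded analytic set.} We have
\[
\nabla F(\theta)=-\sum_i w_i(\theta-x_i)\varphi_d(\theta-x_i),
\]
so \(\theta\) is critical if and only if
\[
\theta=\sum_i p_i(\theta)x_i,\qquad p_i(\theta)=\frac{w_i\varphi_d(\theta-x_i)}{F(\theta)}.
\]
Hence every critical point lies in the compact set \(\conv\{x_1,\ldots,x_n\}\).

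The set \(\Crit(F)\) is the zero set of the real analytic map \(\nabla F\), so it is a compact real analytic set. A compact analytic set has finitely many connected components, and each component is either a point or has positive dimension. It therefore suffices to rule out a positive-dimensional component. If one existed, the real analytic curve selection lemma would give a nonconstant real analytic arc \(\gamma:(-\epsilon,\epsilon)\to\Crit(F)\).

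\textbf{Algebraic reformulation along the arc.} Multiply \(\nabla F(\theta)=0\) by \((2\pi)^{d/2}e^{\|\theta\|^2/2}\). The critical equations become
\[
\sum_i c_i(\theta-x_i)e^{\langle x_i,\theta\rangle}=0,\qquad c_i=w_ie^{-\|x_i\|^2/2}>0.
\]
This is a system that is polynomial in \(\theta\) and in the exponentials \(e^{\langle x_i,\theta\rangle}\).

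Along the arc, set \(y(t)=\gamma(t)\) and \(z_i(t)=\exp(\langle x_i,\gamma(t)\rangle)\). Work with germs of analytic functions at \(t=0\), extended to complex convergent power series.

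\textbf{Transcendence bound via Ax.} Apply Ax's theorem (the Schanuel-type statement for power series) to the functions \(u_i=\langle x_i,\gamma\rangle\) and \(e^{u_i}\). If the \(u_i-u_i(0)\) are \(\Q\)-linearly independent modulo constants, Ax's theorem gives
\[
\trdeg_{\mathbb C}\,\mathbb C(u,e^u)\ge k+1,
\]
where \(k\) is the \(\Q\)-rank of the span of the \(u_i\) modulo constants.

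To use this:
\begin{itemize}
\item Pick a maximal \(\Q\)-independent subfamily of the distinct centers' linear forms along \(\gamma\). The dependent forms \(u_j\) satisfy \(\Q\)-linear relations modulo constants, so the corresponding \(e^{u_j}\) are algebraic (monomial) over the others.
\item The field generated by \(\gamma\) and the \(e^{u_i}\) has transcendence degree at most \(m+\) (number of independent exponentials), where \(m=\trdeg\,\mathbb C(\gamma)\).
\item The critical equations impose \(d\) algebraic relations. Their independence must be checked; this is where positivity of the \(c_i\) and the convex-hull structure should enter.
\end{itemize}

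\textbf{Main obstacle.} The hard step is showing the \(d\) relations genuinely cut the transcendence degree below Ax's lower bound. The rough count is that Ax's bound gives at least \(k+1\), while the equations allow only about \(k+m-d\) plus corrections. The delicate point is the \(\Q\)-linear dependence among the centers; a generic count does not suffice.

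My first attempt would be to choose a lattice/\(\Q\)-basis \(v_1,\ldots,v_r\) of the \(\Q\)-span of the \(x_i\). Write \(x_i=\sum_j a_{ij}v_j\) with \(a_{ij}\in\Q\). Then each exponential becomes a Laurent monomial in \(s_j=e^{\langle v_j,\theta\rangle/N}\) for a common denominator \(N\).

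The critical system then reads as polynomial equations in \((\theta,s)\). The graph of \(\theta\mapsto s\) is an analytic subgroup-type set, and Ax's theorem says an analytic curve on it lying in an algebraic variety \(V\) must lie in a coset of a proper subtorus.

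We would then use induction on \(r\) (or on \(n\)). Restricting to a coset in which some \(\langle v,\theta\rangle\) is constant means \(\gamma\) moves in a hyperplane orthogonal to \(v\). Projecting the critical equations onto \(v^\perp\) then gives the critical equations of a Gaussian mixture with the projected centers and modified positive weights. Since the weights stay positive, the induction hypothesis (lower-dimensional mixture, or fewer distinct effective directions) applies and shows such an arc is constant.

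The base cases are \(d=1\), handled by the Chebyshev-system bound, and the case where all centers coincide, which is trivial. This contradiction with \(\gamma\) being nonconstant proves that \(\Crit(F)\) is finite, and hence that there are finitely many modes.
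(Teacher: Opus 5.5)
Your setup is the same as the paper's: critical points lie in $\conv\{x_i\}$, you use curve selection, you apply Ax's theorem to $u_i=\langle x_i,\gamma\rangle$ and $e^{u_i}$, and you turn $\Q$-dependences among the $u_i$ into monomial relations among the exponentials. However, the step you call the main obstacle is the heart of the proof, and you leave it open. Ax only gives a lower bound $k+1$. Without a matching upper bound $\le k$ on the transcendence degree along the arc, there is no contradiction.

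Your fallback induction does not avoid this. The version of Ax you quote says that an arc on the exponential graph lying in an algebraic variety $V$ must lie in a coset of a proper subtorus. That is false without a dimension hypothesis such as $\dim V\le r$, the number of exponential coordinates; take $V$ to be the whole space. Checking that hypothesis for the variety cut out by the critical equations is exactly the dimension count you have not carried out. The projection step itself is sound: if $\langle v,\gamma\rangle$ is constant, $F$ restricted to that hyperplane is a $(d-1)$-dimensional isotropic mixture with positive weights. So the induction would close once the coset conclusion were justified.

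The missing idea is elementary. Your own critical equation $\sum_i c_i(\theta-x_i)e^{\langle x_i,\theta\rangle}=0$ is linear in $\theta$, with coefficient $\sum_i c_i e^{\langle x_i,\theta\rangle}>0$. So along the arc
\[
\gamma=\frac{\sum_i c_i e^{u_i}x_i}{\sum_i c_i e^{u_i}}\in\R\bigl(e^{u_1},\ldots,e^{u_n}\bigr),
\]
and hence every $u_i$ already lies in $\R(e^{u_1},\ldots,e^{u_n})$. You need no independence of the $d$ relations and no bookkeeping with $m=\trdeg\,\mathbb{C}(\gamma)$.

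Now take a maximal subfamily $v_1,\ldots,v_s$ of the $u_i$ that is $\Q$-independent modulo $\R$. Here $s\ge1$, since otherwise $\gamma$ would be constant. Each $e^{u_i}$ is algebraic over $\R(e^{v_1},\ldots,e^{v_s})$, so $\trdeg_{\R}\R(v,e^{v})\le s$. Ax gives $\ge s+1$, because the $v_j$ are nonconstant and so the derivative row has rank one.

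This is the paper's argument. It handles $\Q$-dependence directly, with no induction on $d$ and no Chebyshev base case. Positivity of the weights is used only to make the denominator nonzero.
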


\subsection{Related Work}

The scalar case \(d=1\) is well understood.
The classical theory of Chebyshev systems implies that an \(n\)-component Gaussian mixture on \(\R\) has at most \(n\) modes and \(2n-1\) critical points \cite{borweinChebyshevDescartesSystems1995, carreiraPerpinanWilliams2003,polyanskiyWu2020}.
The situation changes drastically in \(d>1\), where the conjecture that the critical set is finite has remained open \cite{wallace2013}.

For \(n=2\) or \(3\), sharp quantitative results have been established. For two-component heteroscedastic Gaussian mixtures in \(\R^d\), Ray and Ren proved the sharp upper bound of \(d+1\) on the number of modes \cite{rayRen2012}.
Okuno and Kabata recently proved that every three-component homoscedastic Gaussian mixture has at most \(15\) critical points and at most \(8\) modes \cite{okunoKabata2026}.

For general \(d\) and \(n\), examples show that the number of modes or critical points can be much greater than \(n\).
Carreira-Perpi{\~n}{\'a}n and Williams exhibited homoscedastic mixtures with more modes than components \cite{carreiraPerpinanWilliams2003}.
Edelsbrunner, Fasy, and Rote analyzed the regular-simplex construction, found exponentially many critical points, and constructed finite isotropic mixtures with a superlinear number of modes \cite{edelsbrunnerFasyRote2013}.
In contrast to the case \(d=1\), the multivariate setting \(d>1\) permits much faster growth in the numbers of modes and critical points. In particular, even for \(d=2\), it was not known whether a finite Gaussian mixture \(F\) necessarily has finitely many modes or critical points.

Under the assumption that the critical set of \(F\) is finite, several quantitative results bound the numbers of modes and critical points.
Am\'endola, Engstr\"om, and Haase obtained a lower bound and the first upper bound on the maximum number of modes for general \(n\) and \(d\), covering both heteroscedastic and homoscedastic mixtures \cite{amendola2020}.
Nguyen subsequently improved these lower and upper bounds \cite{nguyen2026}. Theorem~\ref{thm:critical-finite} supplies the finiteness assumption used in these quantitative results, so the resulting mode-counting bounds hold unconditionally.

\subsection{Application}

For clustering, the Gaussian mean-shift algorithm seeks modes of a Gaussian kernel density estimate. Its convergence was established under the assumption that the stationary points of the estimate, which are critical points of a finite Gaussian mixture, are isolated \cite{ghassabeh2015}. Since a Gaussian kernel density estimate based on finitely many observations becomes a finite isotropic Gaussian mixture after rescaling by the bandwidth, Theorem~\ref{thm:critical-finite} shows that this assumption always holds and thereby guarantees convergence of the Gaussian mean-shift algorithm.

The Gaussian-mixture NPMLE, discussed in more detail in Section~\ref{sec:npmle}, is an effective and parameter-free estimator with many nearly-optimal statistical properties \cite{chen2026sharpregrethellingerboundsgaussian,sahaGuntuboyina2020,yanWangRigollet2024}.
However, NPMLEs can be nonunique \cite{soloff2025}, and previous work left open whether an NPMLE could have a continuum of support points, a possibility that could cause algorithmic difficulties. By applying Theorem~\ref{thm:critical-finite}, we prove that every NPMLE is finitely supported.
\begin{restatable}{theorem}{maintheoremnpmle}
\label{thm:npmle-finite}
Let \(d,n\geq1\) and \(x_1,\ldots,x_n\in\R^d\).
There exists a finite Gaussian mixture \(F\) such that every solution \(\widehat\pi\) to the NPMLE problem \eqref{eq:npmle_definition} for the dataset \(\{x_1,\ldots,x_n\}\) is supported on \(\Crit(F)\), which has finite cardinality by Theorem~\ref{thm:critical-finite}. Therefore, every \(\widehat\pi\) is a discrete distribution with finite support.
\end{restatable}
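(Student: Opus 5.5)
The plan is to combine the standard optimality conditions for the Gaussian-mixture NPMLE with uniqueness of the fitted density and Theorem~\ref{thm:critical-finite}. I take the NPMLE problem \eqref{eq:npmle_definition} to be maximizing \(\ell(\pi)=\frac1n\sum_{i=1}^n\log f_\pi(x_i)\) over probability measures \(\pi\) on \(\R^d\), where \(f_\pi(x)=\int\varphi_d(x-\theta)\,d\pi(\theta)\).

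\textbf{Step 1: the fitted values are unique.} Consider the vector \((f_\pi(x_1),\dots,f_\pi(x_n))\). It ranges over a convex set, and \(\sum_i\log v_i\) is strictly concave in \(v\). Hence every maximizer \(\widehat\pi\) gives the same vector \(\widehat f_i\coloneqq f_{\widehat\pi}(x_i)>0\). Existence of a maximizer is standard; the theorem concerns every solution, so I only need that the common value is well defined.

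\textbf{Step 2: the first-order condition.} For any maximizer \(\widehat\pi\) and any \(\theta\), perturb to \((1-t)\widehat\pi+t\delta_\theta\) and differentiate at \(t=0^+\). This gives
\[
D(\theta)\coloneqq\frac1n\sum_{i=1}^n\frac{\varphi_d(x_i-\theta)}{\widehat f_i}\le 1 \quad\text{for all }\theta\in\R^d.
\]
Integrating \(D\) against \(\widehat\pi\) gives exactly \(\frac1n\sum_i \widehat f_i/\widehat f_i=1\). Since \(D\le 1\) and its \(\widehat\pi\)-average is \(1\), we get \(D=1\) \(\widehat\pi\)-a.e. By continuity of \(D\), every point of \(\supp\widehat\pi\) is a global maximizer of \(D\), with \(\max D=1\).

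\textbf{Step 3: apply Theorem~\ref{thm:critical-finite}.} Note that
\[
D(\theta)=\sum_{i=1}^n w_i\varphi_d(\theta-x_i),\qquad w_i=\frac{1}{n\widehat f_i}>0,
\]
which is a finite isotropic Gaussian mixture \(F\coloneqq D\) of the form \eqref{eq:intro-mixture}. By Step 1 its weights do not depend on which NPMLE is chosen. A global maximizer of the smooth function \(F\) is a critical point, so \(\supp\widehat\pi\subseteq\Crit(F)\) for every NPMLE \(\widehat\pi\). Theorem~\ref{thm:critical-finite} says \(\Crit(F)\) is finite, so \(\widehat\pi\) is discrete with finite support.

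\textbf{Main obstacle.} The step needing the most care is Step 2, specifically justifying the directional derivative. Differentiating \(\log\) of \((1-t)f_{\widehat\pi}(x_i)+t\varphi_d(x_i-\theta)\) at \(t=0\) is elementary because \(\widehat f_i>0\). The step from "\(D=1\) almost everywhere" to "\(D=1\) on the whole support" uses continuity of \(D\). With these in place, the rest of the argument follows directly.
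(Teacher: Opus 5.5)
Your proposal is correct and follows essentially the same route as the paper. The paper likewise uses strict concavity for a unique fitted likelihood vector (Proposition~\ref{prop:likelihood-vector}) and the perturbation toward \(\delta_\theta\) plus the integration identity to place the support in \(\argmax D\) (Proposition~\ref{prop:dual}). It then notes that \(D\) is a finite isotropic Gaussian mixture with positive weights \(1/(n\widehat p_i)\), so Theorem~\ref{thm:critical-finite} makes this set finite.
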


The rest of the paper is organized as follows. In Section~\ref{sec:critical}, we recall the necessary background in analytic geometry and Ax--Schanuel functional-transcendence theory and then prove Theorem~\ref{thm:critical-finite}. In Section~\ref{sec:npmle}, we review the NPMLE for Gaussian location-mixture models and several known propositions before applying Theorem~\ref{thm:critical-finite} to prove Theorem~\ref{thm:npmle-finite}. In Section~\ref{sec:discussion}, we discuss limitations, possible extensions, and open problems.

\section{Proof of Theorem~\ref{thm:critical-finite}}
\label{sec:critical}

In this section, we first restate Theorem~\ref{thm:critical-finite}, sketch its proof, recall the necessary mathematical background, and then give the full proof.

\maintheorem*

\paragraph{Outline of the proof.}
We prove by contradiction, assuming that \(|\Crit(F)|=\infty\).
The critical points of \(F\) lie in the compact convex hull of
\(\{x_1,\ldots,x_n\}\).
Thus \(|\Crit(F)|=\infty\) implies that \(\Crit(F)\) has an accumulation point \(\theta_*\in\Crit(F)\).
The real-analytic curve selection theorem then produces an analytic curve \(\eta:[0,1]\to\Crit(F)\) with \(\eta(0)=\theta_*\).
We restrict the relevant functions and critical-point equations to the curve \(\eta\). More specifically, we study the transcendence degree over \(\R\) of the function field
\[\R\big(x_1\cdot\eta(t),\ldots,x_n\cdot\eta(t),\exp(x_1\cdot\eta(t)),\ldots,\exp(x_n\cdot\eta(t))\big),\]
which lies in the fraction field of the ring of real-analytic functions on \((0,1)\).
The critical equation \(\nabla F(\theta)=0\) holds along \(\eta\) and imposes an upper bound on the transcendence degree of this field. On the other hand, Ax's functional-transcendence theorem gives an incompatible lower bound. This contradiction proves that \(|\Crit(F)|\) is finite.

For the remainder of this section, we review the necessary background in analytic geometry, field theory, differential algebra, and Ax--Schanuel functional-transcendence theory before proving the main theorem.

\subsection{Preliminaries on real-analytic geometry}

Analytic geometry studies spaces that can be described locally as zero sets of analytic functions, in both the real-analytic and complex-analytic settings.
General introductions include \cite{krantzParks2002} for real-analytic functions and \cite{narasimhan1966,chirka1989,lojasiewicz1965} for real-analytic and complex-analytic sets.
In this paper, for simplicity, we restrict attention to real-analytic subsets of Euclidean space.
A more general treatment of analytic geometry, including semianalytic and subanalytic sets, can be found in \cite{lojasiewicz1965,bierstoneMilman1988}.

\begin{definition}[Real-analytic function on \(\R^d\)]
On an open set \(U\subseteq\R^d\),
a function \(f:U\to\R\) is real analytic if \(f\in C^\infty(U)\) and, for every \(x_0\in U\), there exists an open neighborhood \(V\ni x_0\) such that the Taylor series of \(f\) at \(x_0\) converges to \(f\) on \(V\). The set of real-analytic functions on \(U\), denoted by \(\mathcal O(U)\), is a ring.

\end{definition}

\begin{definition}[Real-analytic subset of \(\R^d\)]
\(A\subseteq\R^d\) is a real-analytic subset of \(\R^d\) if, for every \(x_0\in\R^d\), there is an open neighborhood \(U\) of \(x_0\) and finitely many real-analytic functions \(f_1,\ldots,f_r:U\to\R\) such that
\[
A\cap U=\{x\in U:f_1(x)=\cdots=f_r(x)=0\}.
\]

In other words, locally at each point \(x_0\in\R^d\), \(A\) is the common zero set of finitely many real-analytic functions.
\end{definition}


\begin{theorem}[Real-analytic curve selection]
\label{thm:curve-selection}
Let \(A\) be a real-analytic subset of \(\R^d\),
and let \(p\in A\) be a non-isolated point;
that is, there exists a sequence \((p_m)_{m=1}^\infty\) in \(A\setminus\{p\}\)
such that \(\lim_{m\to\infty}p_m=p\).
There exists a continuous injective map \(\eta:[0,1]\to\R^d\) such that
\[
\eta(0)=p,
\qquad
\eta((0,1])\subseteq A\setminus\{p\}.
\]
Furthermore, each coordinate function \(\eta_k\), for \(k=1,\ldots,d\), is analytic on \((0,1)\).
\end{theorem}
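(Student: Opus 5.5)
This is the classical curve selection lemma; I would prove it with the structure theory of real-analytic germs, namely Puiseux parametrization after reducing to a one-dimensional germ. Everything is local, so we work in a small neighborhood \(U\) of \(p\). On \(U\), \(A\cap U\) is the zero set of finitely many analytic functions. Set \(g=\sum_j f_j^2\); then \(A\cap U=\{g=0\}\). Translating, we may assume \(p=0\).

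\textbf{Step 1: reduce to a curve.} The germ of \(A\) at \(0\) is not \(\{0\}\), because \(0\) is a non-isolated point. Consider \(\rho(x)=\lVert x\rVert^2\) restricted to \(A\setminus\{0\}\). The set of points of \(A\setminus\{0\}\) where \(\rho|_A\) fails to be a submersion (in the stratified sense) is semianalytic. By the Łojasiewicz stratification theorem, the germ \(A\setminus\{0\}\) decomposes into finitely many connected analytic manifolds (strata) whose closures contain \(0\) or avoid it. Choose a stratum \(S\) with \(0\in\overline S\), of minimal dimension \(k\ge1\).
\begin{itemize}
\item If \(k=1\), then \(S\) is an analytic curve, and we take it directly.
\item If \(k>1\), consider the critical locus of \(\rho|_S\), i.e.\ the points of \(S\) where \(\nabla\rho\) lies in the normal space of \(S\). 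This is again semianalytic. By the Łojasiewicz inequality applied to \(\rho\), \(0\) lies in its closure: the trajectories of \(\nabla\rho|_S\) reaching toward \(0\) force this.
\end{itemize}
By induction on dimension, intersecting with generic linear conditions (for instance, fixing ratios of coordinates by a Sard-type argument), we cut \(S\) down to a one-dimensional semianalytic germ \(C\subseteq A\) with \(0\in\overline{C\setminus\{0\}}\).

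\textbf{Step 2: parametrize the curve germ.} A one-dimensional real-analytic (or semianalytic) germ is a finite union of half-branches. For each half-branch, Puiseux's theorem, applied after a generic linear change of coordinates so that the projection to one axis \(x_1\) is finite, gives a parametrization of the form
\[
x_1=s^m,\qquad x_k=\psi_k(s),
\]
with \(\psi_k\) convergent power series and \(s\in[0,\epsilon)\). This yields a map \(\tilde\eta(s)\) that is analytic on \((-\epsilon,\epsilon)\), with \(\tilde\eta(0)=0\) and \(\tilde\eta((0,\epsilon))\subseteq A\setminus\{0\}\). Injectivity near \(0\) follows because \(x_1=s^m\) is strictly increasing for \(s>0\). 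Rescaling by \(t=s\epsilon/2\) on \([0,1]\) gives \(\eta\). Each \(\eta_k\) is analytic on \((0,1)\), and indeed on a neighborhood of \([0,1]\). The map \(\eta\) is continuous, and \(\eta((0,1])\) avoids \(p\) because \(x_1>0\) there.

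The main obstacle is Step 1, where the dimension reduction must stay inside the analytic or semianalytic category. I would not redo this by hand. I would cite Milnor's curve selection lemma (\emph{Singular Points of Complex Hypersurfaces}, Lemma 3.1) and its real-analytic or semianalytic version due to Łojasiewicz and Bierstone--Milman. Those sources apply directly to the set \(\{g=0\}\cap\{0<\rho<\epsilon\}\) and provide exactly the analytic arc constructed above.
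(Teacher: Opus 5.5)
There is a genuine gap in Step~1. The reduction to a one-dimensional germ through \(0\) is exactly the nontrivial content of curve selection, and both mechanisms you propose for it fail.

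\emph{Critical points of \(\rho|_S\).} Your claim that \(0\) lies in the closure of the critical locus of \(\rho|_S\) is backwards.
\begin{itemize}
\item Take \(A=\R^2\) and \(p=0\). The only stratum adjacent to \(0\) is \(S=\R^2\setminus\{0\}\), so \(k=2\), and \(\rho|_S=\lVert x\rVert^2\) has no critical points at all.
\item In general these critical points never accumulate at \(0\). If they did, curve selection applied to that critical set would give an analytic arc \(\gamma\) of such points ending at \(0\).
\item Along that arc, \(\gamma'(t)\in T_{\gamma(t)}S\), while criticality means \(\gamma(t)\perp T_{\gamma(t)}S\). Hence \((\rho\circ\gamma)'=2\langle\gamma,\gamma'\rangle=0\), so \(\rho\circ\gamma\equiv\rho(0)=0\), a contradiction. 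This is the transversality of small spheres to strata.
\end{itemize}

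\emph{Generic linear slices.} Cutting \(S\) down by generic linear conditions through \(0\) can destroy accumulation at \(0\).
\begin{itemize}
\item Take the cusp \(A=\{x^2+y^2=z^3\}\subset\R^3\). Every plane \(z=ax+by\) (an open dense family of planes through \(0\)) meets \(A\) only at \(0\) near the origin.
\item The reason: on \(A\setminus\{0\}\) one has \(z>0\) and \(\lVert(x,y)\rVert=z^{3/2}\). On the plane this gives \(z\le\lVert(a,b)\rVert z^{3/2}\), which fails for all sufficiently small \(z>0\).
\item Likewise, fixing the ratio \(x/z=c\) with generic \(c\neq0\) gives \(y^2=z^2(z-c^2)\), so those fibres lie in \(\{z\ge c^2\}\). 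A Sard-type choice of regular value does not help.
\end{itemize}
A valid reduction needs a choice adapted to \(A\). One example is Milnor's device: on each small sphere, pick a critical point (such as a maximizer) of an auxiliary function on the compact nonempty set \(A\cap\{\lVert x\rVert=r\}\). One then proves that these points form a lower-dimensional set that still accumulates at \(0\).

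That said, you explicitly defer Step~1 to the literature, and the paper does no more. It gives no proof and simply quotes the result from \cite[Section~19, Proposition~2]{lojasiewicz1965}. So a proposal consisting of that citation plus your Step~2 is acceptable. Step~2 is sound: a one-dimensional semianalytic germ is a finite union of half-branches, each has a Puiseux parametrization \(x_1=\pm s^m\), and injectivity follows from monotonicity of \(x_1\). One small fix there: the rescaling should be \(s=\epsilon t/2\).

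The heuristic in Step~1, however, should be dropped rather than offered as a proof sketch. Also, Milnor's Lemma~3.1 is stated for real \emph{algebraic} sets defined by polynomial equations and inequalities. It does not cover \(\{g=0\}\) for merely analytic \(g\), so the semianalytic version cited above is the right reference.
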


Theorem~\ref{thm:curve-selection} appears in \cite[Section~19, Proposition~2]{lojasiewicz1965}, where it is stated in the more general setting in which \(A\) is a real semianalytic manifold.
For simplicity, we restrict the original statement in \cite{lojasiewicz1965} to the case in which \(A\) is a real-analytic subset of Euclidean space.

\subsection{Preliminaries on Ax--Schanuel functional-transcendence theory}

Before stating Ax's functional-transcendence theorem, we introduce a few algebraic concepts.
Algebraic independence and transcendence degree measure polynomial relations among elements of a field extension; a standard reference is \cite{lang2002}.
We also recall the differential algebra needed below; see \cite{kolchin1973}.
The functional-transcendence theorem used here is due to Ax \cite{ax1971}.

Let \(K\subseteq L\) be fields. We recall the concepts of algebraic independence and transcendence degree.

\begin{definition}[Algebraic independence]
Elements \(\alpha_1,\ldots,\alpha_s\in L\) are {algebraically independent over \(K\)} if, for every polynomial \(P\in K[X_1,\ldots,X_s]\),
\[
P(\alpha_1,\ldots,\alpha_s)=0 \implies P=0.
\]
\end{definition}

\begin{definition}[Transcendence degree]
A maximal subset of \(L\) that is algebraically independent over \(K\) is a transcendence basis of the field extension \(L/K\).
Its cardinality is the {transcendence degree}, denoted by \(\trdeg_K L\).
\end{definition}
\begin{remark}
    By \cite[Chapter~VIII, Theorem~1.1, p.~356]{lang2002}, distinct transcendence bases of \(L/K\) have the same cardinality.
    Hence \(\trdeg_K L\) is independent of the choice of transcendence basis.

    For readers unfamiliar with transcendence degree, a few elementary examples may be helpful:
    \(\trdeg_{\R}\mathbb C=0\) because \(\mathbb{C}=\R(i)\) and \(i\) satisfies the real-coefficient polynomial equation \(x^2+1=0\); \(\trdeg_{\Q}\Q(\pi)=1\) because \(\pi\) is transcendental; and \(\trdeg_{\Q}\R=\infty\) because \(\Q\) is countable whereas \(\R\) is uncountable.
    Determining the exact value of \(\trdeg_{\Q}\Q(\pi,e)\in\{1,2\}\) remains open; it would equal \(2\) under Schanuel's conjecture.
\end{remark}

Ax--Schanuel functional-transcendence theory was originally developed for function fields. Such fields carry the additional structure of differentiation, which can be characterized abstractly as follows \cite{kolchin1973}.
\begin{definition}[Derivation on a field]
A {derivation} \(\partial\) on a field \(L\) is a map \(\partial:L\to L\) that is additive and satisfies the Leibniz rule
\[
\partial(x+y)=\partial(x)+\partial(y),\qquad \partial(xy)=x\partial(y)+y\partial(x),\qquad \forall x,y\in L.
\]

\end{definition}

\begin{definition}[Constant field of a differential field]
    A differential field is a field \(L\) equipped with \(\Delta\), a family of derivations on \(L\). Its constant field is defined to be
\[
C=\{x\in L:\partial x=0\text{ for every }\partial\in\Delta\}.
\]
\end{definition}
\begin{remark}
    If \((L,\Delta)\) is a differential field of characteristic zero, then \(\Q\subseteq C\); see also \cite[Chapter~I, Section~1]{kolchin1973}.
    Indeed, for every \(\partial\in\Delta\), the Leibniz rule gives
    \[
    \partial(1)=\partial(1\cdot1)=2\partial(1),
    \]
    so \(\partial(1)=0\). Additivity then gives \(\partial(m)=0\) for every \(m\in\mathbb Z\). If \(n\in\mathbb Z\setminus\{0\}\), then
    \[
    0=\partial(1)=\partial(n n^{-1})=n\partial(n^{-1})+n^{-1}\partial(n)=n\partial(n^{-1}),
    \]
    and hence \(\partial(n^{-1})=0\). Thus every element of the prime field \(\Q\subseteq L\) is annihilated by every \(\partial\in\Delta\), proving \(\Q\subseteq C\).
\end{remark}

\begin{definition}[Rational independence]
Let \(\Q\subseteq K\subseteq L\) be fields.
Elements \(\alpha_1,\ldots,\alpha_s\in L\) are {rationally independent modulo \(K\)} if, for every
\(q_1,\ldots,q_s\in\Q\),
\[
\sum_{j=1}^{s}q_j\alpha_j \in K \implies q_1=q_2=\cdots=q_s=0.
\]
\end{definition}

With this differential-algebraic background in place, we can now state Ax's functional-transcendence theorem.

\begin{theorem}[Ax's functional-transcendence theorem]
\label{thm:ax-general}
Let \((L,\Delta)\) be a differential field of characteristic zero with constant field \(C\).
For \(s\geq1\), suppose that \(\alpha_1,\ldots,\alpha_s\in L\) are rationally independent modulo \(C\) and that \(\beta_1,\ldots,\beta_s\in L^\times\) satisfy
\[
\frac{\partial\beta_j}{\beta_j}=\partial\alpha_j,
\qquad \forall \partial\in\Delta,\quad \forall j=1,\ldots,s.
\]
Then
\[
\trdeg_C C(\alpha_1,\ldots,\alpha_s,\beta_1,\ldots,\beta_s)
\geq
s+\operatorname{rank}
\bigl(\partial\alpha_j\bigr)_{\partial\in\Delta, 1\leq j\leq s},
\]
where the last term is the rank of the derivative matrix \(\bigl(\partial\alpha_j\bigr)_{{\partial\in\Delta, 1\leq j\leq s}}\in L^{|\Delta|\times s}\).
\end{theorem}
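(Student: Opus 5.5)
Since this statement is Ax's classical theorem, I would reproduce Ax's differential-forms argument rather than look for a new proof.

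Let \(K=C(\alpha_1,\ldots,\alpha_s,\beta_1,\ldots,\beta_s)\) and \(t=\trdeg_C K\). We may assume \(t<\infty\). Consider the module of Kähler differentials \(\Omega_{K/C}\). Since \(C\) has characteristic zero, it is a \(K\)-vector space of dimension exactly \(t\). Each \(\partial\in\Delta\) restricts to a derivation of \(K\) over \(C\): it preserves \(K\) because \(\partial\beta_j=\beta_j\partial\alpha_j\), which I would use together with a stability/reduction step if \(\partial\alpha_j\notin K\). It therefore induces a \(K\)-linear functional \(\langle\partial,\cdot\rangle\) on \(\Omega_{K/C}\) with \(\langle\partial,dx\rangle=\partial x\).

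Put \(\omega_j=d\beta_j/\beta_j-d\alpha_j\), let \(W=\operatorname{span}_K\{\omega_j\}\) and \(V=\operatorname{span}_K\{d\alpha_j\}\). The hypothesis says every \(\omega_j\) lies in the common kernel of the maps \(\Omega\to K^{\Delta}\) given by \(\eta\mapsto(\langle\partial,\eta\rangle)_{\partial}\). The image of \(W+V\) equals the image of \(V\), which has dimension \(r=\operatorname{rank}(\partial\alpha_j)\). Hence \(t\ge\dim(W+V)\ge \dim W+r\), and it suffices to show that \(\dim_K W=s\), i.e.\ that the \(\omega_j\) are \(K\)-linearly independent.

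\textbf{Key lemma.} If the \(\omega_j\) are \(K\)-linearly dependent, some nontrivial rational combination of the \(\alpha_j\) lies in \(C\). This lemma is the main obstacle, and I would prove it in three steps.

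\emph{Step 1: make the coefficients constant.} Take a dependence \(\sum_j c_j\omega_j=0\) with the minimal number of nonzero coefficients, normalized so that one coefficient equals \(1\). Each \(\omega_j\) is closed, since \(d\omega_j=0\). Applying the Lie derivative along any derivation \(D\) of \(K/C\) gives \(\sum_j D(c_j)\,\omega_j=0\), which is a shorter dependence. By minimality \(D(c_j)=0\) for all \(D\in\operatorname{Der}(K/C)\), so \(c_j\) is algebraic over \(C\). After passing to the algebraic closure of \(C\), we may assume \(c_j\in C\).

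\emph{Step 2: make the coefficients rational.} Choose a \(\Q\)-basis \(e_1,\ldots,e_m\) of \(\operatorname{span}_\Q\{c_j\}\) and regroup. The relation becomes \(\sum_k e_k\bigl(d\log\gamma_k-d a_k\bigr)=0\), where \(a_k\) are \(\Q\)-combinations of the \(\alpha_j\) and \(\gamma_k\) are products of rational powers of the \(\beta_j\). Here I would use a valuation/residue argument. Restrict to a curve: a one-dimensional function field over \(C\) through which the relation descends, chosen by specializing generic constants. At each place \(P\), the residue of \(d a_k\) vanishes because it is exact, while the residue of \(d\log\gamma_k\) is the rational number \(\operatorname{ord}_P\gamma_k\). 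The \(\Q\)-independence of the \(e_k\) then forces \(\operatorname{ord}_P\gamma_k=0\) for all \(P\) and \(k\). Hence each \(\gamma_k\) is constant, so \(d\gamma_k=0\).

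\emph{Step 3: conclude.} The relation now reduces to \(d\bigl(\sum_k e_k a_k\bigr)=0\), and \(\Q\)-independence of the \(e_k\) should give \(d a_1=0\) for a suitable nontrivial \(a_1\). Then \(a_1\), a nontrivial rational combination of the \(\alpha_j\), is constant, i.e.\ lies in \(C\). This contradicts the assumed rational independence modulo \(C\).

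The delicate part is the descent to a curve in Step 2, together with checking that residues of exact forms vanish. If this becomes too technical, I would fall back on citing Ax's original argument for the key lemma. The counting with \(W\) and \(V\) above is routine.
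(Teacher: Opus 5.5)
The paper does not prove this statement; it quotes it from \cite[Theorem~3]{ax1971}. So the comparison is with Ax's argument, whose architecture you reproduce. Your counting step is fine after a base change. A \(\partial\in\Delta\) need not map \(K\) into \(K\), so treat it as a \(C\)-derivation \(K\to L\), i.e.\ an \(L\)-linear functional on \(\Omega_{K/C}\otimes_K L\), and take ranks over \(L\).

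The genuine gap is Step~1 of your key lemma. For an arbitrary \(D\in\operatorname{Der}(K/C)\), Cartan's formula for the closed form \(\omega_j\) gives \(L_D\omega_j=d(\omega_j(D))\), which is generally nonzero. So applying \(L_D\) to \(\sum_j c_j\omega_j=0\) yields \(\sum_j D(c_j)\omega_j+\sum_j c_j\,d(\omega_j(D))=0\), not a shorter relation. The intermediate claim is in fact false. In \(C(x,y)\) the closed forms \(d(xy),dx,dy\) satisfy the minimal relation \(d(xy)-y\,dx-x\,dy=0\), yet they admit no nontrivial relation with coefficients algebraic over \(C\).

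More fundamentally, your key lemma never uses the hypothesis \(\partial\beta_j/\beta_j=\partial\alpha_j\), and without it the lemma is false. Take \(K=C(x)\) with \(\alpha_1=\beta_1=\beta_2=x\) and \(\alpha_2=x^2\). The two forms are dependent because \(\dim_K\Omega_{K/C}=1\), yet no nontrivial \(q_1x+q_2x^2\) lies in \(C\).

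The missing idea, and exactly where the hypothesis enters, is to take Lie derivatives only along the \(\partial\in\Delta\). Do this inside \(\Omega_{L/C}\), into which \(\Omega_{K/C}\otimes_K L\) injects in characteristic zero. There each \(\omega_j\) is closed and \(\iota_\partial\omega_j=\partial\beta_j/\beta_j-\partial\alpha_j=0\), so \(L_\partial\omega_j=0\). Now take a relation \(\sum_j c_j\omega_j=0\) over \(L\) of minimal support with one coefficient equal to \(1\). Applying \(L_\partial\) gives the shorter relation \(\sum_j\partial(c_j)\omega_j=0\), hence \(\partial c_j=0\) for all \(\partial\in\Delta\). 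This means \(c_j\in C\) directly, with no passage to \(\overline{C}\), and the relation descends to \(\Omega_{K/C}\) by injectivity.

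Step~3 has the same defect. Write \(c_j=\sum_k q_{jk}e_k\) with \(q_{jk}\in\Q\). Then \(d(\sum_k e_ka_k)=0\) only says that the \(C\)-linear combination \(\sum_j c_j\alpha_j\) is algebraic over \(C\). The \(\Q\)-independence of the \(e_k\) cannot upgrade this to \(da_k=0\): for instance \(a_1=e_2x\) and \(a_2=-e_1x\) satisfy \(e_1a_1+e_2a_2=0\). Instead, use the hypothesis again, as follows.
\begin{enumerate}[label=\textup{(\alph*)}]
\item Step~2 shows that \(\Gamma_k=\prod_j\beta_j^{Nq_{jk}}\in K\) is algebraic over \(C\), with \(N\) clearing denominators.
\item The constants of a characteristic-zero differential field are relatively algebraically closed, so \(\Gamma_k\in C\).
\item Hence \(0=\partial\Gamma_k/\Gamma_k=N\,\partial a_k\) for every \(\partial\in\Delta\), so \(a_k\in C\). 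Some \(a_k\) is a nontrivial rational combination of the \(\alpha_j\) because not all \(c_j\) vanish.
\end{enumerate}

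Finally, the curve reduction in Step~2 is cleaner by enlarging the base than by specializing. If \(\Gamma_k\) were transcendental over \(C\), complete it to a transcendence basis of \(K/C\) and adjoin the remaining basis elements to \(C\) to get \(C'\). Then run the residue argument on the one-variable function field \(\overline{C'}K\) over \(\overline{C'}\), where the relation persists with the same coefficients \(e_k\).
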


This theorem first appeared in \cite[Theorem~3, p.~253]{ax1971}. See also \cite{pong2016} for a modern exposition.

Finally, we introduce the following analytic function field on an open interval \(I\subset\R\),
which we will use to apply Ax's functional-transcendence theorem.
\begin{definition}[Analytic function field on an open interval]
Let \(I\subseteq\R\) be an open interval and let \(\mathcal O(I)\) be the ring of real-analytic functions on \(I\).
The {analytic function field on \(I\)} is
\[
\mathcal M(I)\coloneqq\operatorname{Frac}\mathcal O(I).
\]
The fact that this fraction field is well-defined is established in part \textup{(i)} of the following lemma.
\end{definition}

\begin{lemma}[Basic properties of the analytic function field]
\label{lem:analytic-field}
For an open interval \(I\subset \mathbb{R}\), the following properties hold:
\begin{enumerate}[label=\textup{(\roman*)}]
    \item \(\mathcal O(I)\) is an integral domain, so its fraction field \(\mathcal{M}(I)\) is well-defined.
    \item The derivation \(\partial_0:\mathcal O(I)\to\mathcal O(I)\) defined by \(\partial_0(f)=f'\) extends uniquely to a derivation \(\partial\) on \(\mathcal M(I)\) by the quotient rule
    \[
    \partial\!\left(\frac{f}{g}\right)=\frac{f'g-fg'}{g^2}.
    \]
    \item The constant field of \((\mathcal M(I),\partial)\) is \(\R\).
\end{enumerate}
\end{lemma}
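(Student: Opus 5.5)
The plan is to prove the three parts in order, with (i) doing most of the work through the identity theorem for real-analytic functions on a connected set.

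For (i), $\mathcal O(I)$ is a commutative ring with identity, since sums and products of real-analytic functions are real analytic. Suppose $fg=0$ on $I$ and $f\not\equiv0$.
\begin{enumerate}[label=\textup{(\alph*)}]
\item Pick $t_0\in I$ with $f(t_0)\neq0$. By continuity, $f$ is nonzero on some open subinterval $J\ni t_0$, so $g=0$ on $J$.
\item Let $S=\{t\in I: g^{(k)}(t)=0\ \text{for all } k\geq0\}$. This set is closed in $I$, because each $g^{(k)}$ is continuous. It is nonempty, because it contains $J$.
\item $S$ is open. If $t\in S$, the Taylor series of $g$ at $t$ is identically zero, and it converges to $g$ on a neighborhood of $t$. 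Hence $g$ vanishes on that neighborhood, and so does every derivative there.
\item Since $I$ is connected, $S=I$, so $g\equiv0$.
\end{enumerate}
Thus $\mathcal O(I)$ has no zero divisors. It is nonzero because it contains $1$, so it is an integral domain and its fraction field $\mathcal M(I)$ exists.

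For (ii), first check that $\partial_0$ is a derivation on $\mathcal O(I)$. The derivative of a real-analytic function is real analytic, since termwise differentiation of a convergent power series converges on the same disc. The map is additive, and it satisfies the Leibniz rule by the usual product rule.
\begin{enumerate}[label=\textup{(\alph*)}]
\item \emph{Uniqueness.} If $\partial$ extends $\partial_0$ and $h=f/g$, then applying Leibniz to $f=hg$ gives $f'=\partial(h)g+hg'$. Solving yields $\partial(h)=(f'g-fg')/g^2$, so the extension is forced.
\item \emph{Well-definedness.} If $f/g=f_1/g_1$, then $fg_1=f_1g$. Differentiating gives $f'g_1+fg_1'=f_1'g+f_1g'$. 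A direct cross-multiplication using these two identities shows $(f'g-fg')g_1^2=(f_1'g_1-f_1g_1')g^2$, which is the equality needed.
\item \emph{Derivation axioms.} Additivity and the Leibniz rule on $\mathcal M(I)$ follow by the routine quotient-rule computations.
\end{enumerate}

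Part (iii) is the only step that needs care. The constants contain $\R$, because constant functions are analytic with zero derivative. Conversely, let $h=f/g$ with $g\not\equiv0$ and $\partial h=0$, so $f'g=fg'$ on all of $I$.
\begin{enumerate}[label=\textup{(\alph*)}]
\item Choose an open subinterval $J$ on which $g\neq0$.
\item On $J$ the function $f/g$ is an honest differentiable function with derivative $(f'g-fg')/g^2=0$, so $f=cg$ on $J$ for some $c\in\R$.
\item The function $f-cg$ is analytic on $I$ and vanishes on $J$. By the identity argument from (i), it vanishes on all of $I$.
\item Hence $h=c$ in $\mathcal M(I)$.
\end{enumerate}
The possible obstacle is that the zeros of $g$ prevent treating $f/g$ as a function on all of $I$. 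Passing to the subinterval $J$ and then applying the identity theorem avoids this.
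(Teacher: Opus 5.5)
Your proposal is correct and follows the paper's proof essentially step for step: the same zero-divisor argument through a subinterval where $f\neq0$, the quotient-rule extension of $\partial_0$, and in (iii) the same passage to a subinterval $J$ with $g\neq0$ followed by the identity theorem applied to $f-cg$. The only difference is that you prove inline what the paper cites: the identity theorem, via connectedness of the set where all derivatives of $g$ vanish (the paper cites Krantz--Parks), and the uniqueness and well-definedness of the extended derivation (the paper cites Kolchin).
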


\begin{proof}
For \textup{(i)}, suppose that \(fg=0\) for \(f,g\in\mathcal O(I)\) and that \(f\) is not identically zero.
Continuity gives a nonempty open subinterval on which \(f\neq0\), so \(g\) vanishes there.
The identity theorem for real-analytic functions \cite[Corollary~1.2.6, p.~14]{krantzParks2002} implies that \(g\) vanishes on all of \(I\).

For \textup{(ii)}, termwise differentiation of the local power-series expansion of \(f\) shows that \(f'\) is again real analytic; see \cite[Chapter~1, Section~1.1]{krantzParks2002}.
Thus \(\partial_0\) maps \(\mathcal O(I)\) into itself, and the usual sum and product rules show that it is additive and satisfies the Leibniz rule. The general extension of a derivation to a ring of quotients is given in \cite[Chapter~I, Section~3]{kolchin1973}.

For \textup{(iii)}, let \(h=f/g\in\mathcal M(I)\) satisfy \(\partial h=0\).
Since \(g\) is not identically zero, there is a nonempty open subinterval \(J\subseteq I\) on which \(g\neq0\).
On \(J\), \(\left(f/g\right)'=0\), so \(f/g=c\) on \(J\) for some \(c\in\R\).
Thus \(f-cg\) vanishes on \(J\), and the identity theorem \cite[Corollary~1.2.6, p.~14]{krantzParks2002} implies that it vanishes on all of \(I\). Hence \(h=c\) in \(\mathcal M(I)\). Conversely, every element of \(\R\) is annihilated by \(\partial\), so the constant field is exactly \(\R\).
\end{proof}

\subsection{Proof of Theorem~\ref{thm:critical-finite}}

\begin{proof}[Proof of Theorem~\ref{thm:critical-finite}]
Suppose for contradiction that \(|\Crit(F)|=\infty\).

\paragraph{Critical equation.} Rewrite the mixture in \eqref{eq:intro-mixture} as
\[
F(\theta)=\varphi_d(\theta) M(\theta),
\]
where \(M(\theta)=\sum_{i=1}^{n}b_i\exp(x_i\cdot \theta)\) and
\(b_i=w_i\exp\left(-\frac12\lVert x_i\rVert^2\right)>0\). Taking the gradient yields
\[
\nabla F(\theta)
=\varphi_d(\theta)
\left(
\sum_{i=1}^{n}b_i \exp(x_i\cdot \theta) x_i
-M(\theta)\theta
\right).
\]
Because \(M(\theta)\) and \(\varphi_d(\theta)\) are positive, the critical equation \(\nabla F(\theta)=0\) is equivalent to
\begin{equation}
\label{eq:critical-fixed-point}
\theta
=
\sum_{i=1}^{n}\frac{b_i \exp(x_i\cdot \theta)}
{M(\theta)}x_i.
\end{equation}
This is a system of \(d\) real-analytic equations in \(\theta\in\R^d\). Hence \(\Crit(F)\) is their common zero set and is therefore a real-analytic subset of \(\R^d\).
Moreover, the coefficients on the right-hand side of \eqref{eq:critical-fixed-point} are positive and sum to one.
Hence
\[
\Crit(F)\subseteq\conv\{x_1,\ldots,x_n\}.
\]
The critical set is closed and lies in this compact convex hull, so it is compact.

\paragraph{Curve selection.} Since \(\Crit(F)\) is infinite and compact, it has an accumulation point \(\theta_*\in\Crit(F)\).
By Theorem~\ref{thm:curve-selection}, there exists a continuous injective curve \(\eta:[0,1]\to\Crit(F)\) such that \(\eta(0)=\theta_*\) and each coordinate function \(\eta_k\) is real analytic on \(I=(0,1)\).

\paragraph{Restriction to the curve.}
Write \(\eta=(\eta_1,\ldots,\eta_d)\) and \(x_i=(x_{i1},\ldots,x_{id})\).
Define the exponent in \eqref{eq:critical-fixed-point} along \(\eta\) by
\[
g_i(t)=x_i\cdot \eta(t)
=\sum_{k=1}^{d}x_{ik}\eta_k(t),
\qquad i=1,\ldots,n.
\]
Since each \(\eta_k\) is real analytic on \(I=(0,1)\), every \(g_i\), as a linear combination of the \(\eta_k\), is also real analytic on \(I\).

Let \(\mathcal O(I)\) be the ring of real-analytic functions on \(I=(0,1)\), and let \(\mathcal M(I)=\operatorname{Frac}\mathcal O(I)\) be its fraction field, equipped with the derivation \(\partial=\frac{d}{dt}\). By Lemma~\ref{lem:analytic-field}, the constant field of \(\mathcal M(I)\) is \(\R\).
Moreover, \(\eta_k(t),g_i(t),\exp(g_i(t))\in\mathcal O(I)\subseteq\mathcal M(I)\). 

This is the restriction-to-the-curve step: after composition with \(\eta\), the functions \(x_i\cdot\theta\) and \(\exp(x_i\cdot\theta)\) become functions on \(I\), and the critical equation \eqref{eq:critical-fixed-point} becomes an identity on \(I\).

Choose a maximal subset \(\{v_1,\ldots,v_s\}\) of \(\{g_1,\ldots,g_n\}\) that is rationally independent modulo the constant field \(\R\) of \(\mathcal M(I)\). The case \(s=0\) is impossible: it would imply that every \(g_i\) lies in \(\R\), so evaluating \eqref{eq:critical-fixed-point} along \(\eta\) would force \(\eta\) to be constant, contrary to its injectivity. Thus \(s\geq1\).
We are interested in the transcendence degree of the following field over \(\R\):
\[
L=\R(v_1,\ldots,v_s,\exp(v_1),\ldots,\exp(v_s)).
\]
We will use the critical equations to prove the upper bound \(\trdeg_{\R}L\leq s\) and Ax's theorem \cite{ax1971} to prove the incompatible lower bound \(\trdeg_{\R}L\geq s+1\).

\paragraph{Upper bound.}
Restricting the critical equation \eqref{eq:critical-fixed-point} along the curve \(\eta\) gives
\[
\eta(t)=\frac{\sum_{i=1}^{n}b_i\exp(x_i\cdot\eta(t))x_i}{\sum_{i=1}^{n}b_i\exp(x_i\cdot\eta(t))},
\]
which implies that
\[
\eta_k\in\R(\exp(g_1),\ldots,\exp(g_n)),\quad k=1,\ldots,d.
\]
Furthermore, since \(g_i\) is a linear combination of \(\{\eta_k\}_{k=1}^d\),
\[
g_i\in\R(\exp(g_1),\ldots,\exp(g_n)),\quad i=1,\ldots,n.
\]
It follows that
\begin{equation}
    L \subseteq \R(g_1,\ldots,g_n,\exp(g_1),\ldots,\exp(g_n)) = \R(\exp(g_1),\ldots,\exp(g_n)).
    \label{eq:subfield}
\end{equation}

On the other hand, by the maximality of \(v_1,\ldots,v_s\), for each \(i\), there exist \(c_i\in\R\) and \(q_{ij}\in\Q\) such that
\[
g_i=c_i+\sum_{j=1}^{s}q_{ij}v_j.
\]
For each \(i\), choose a positive integer \(r_i\) such that \(r_iq_{ij}\in\mathbb Z\) for every \(j\). Exponentiating gives
\begin{align}
    \bigl(\exp(g_i)\bigr)^{r_i}
    =\exp(r_i c_i)\prod_{j=1}^{s}\bigl(\exp(v_j)\bigr)^{r_i q_{ij}}.
    \label{eq:algebraic}
\end{align}
The right-hand side belongs to \(\R(\exp(v_1),\ldots,\exp(v_s))\), so \(\exp(g_i)\) is algebraic over this field and hence over \(L\).

Finally, \eqref{eq:subfield} shows that \(L\) is a subfield of \(\R(\exp(g_1),\ldots,\exp(g_n))\),
and \eqref{eq:algebraic} shows that \(\R(\exp(g_1),\ldots,\exp(g_n))\) is algebraic over \(\R(\exp(v_1),\ldots,\exp(v_s))\).
Therefore
\begin{equation}
\label{eq:upper-trdeg}
\trdeg_{\R}L \le \trdeg_{\R} \R(\exp(g_1),\ldots,\exp(g_n))  = \trdeg_{\R}\R(\exp(v_1),\ldots,\exp(v_s)) \leq s.
\end{equation}

\paragraph{Lower bound.}

By definition, \(v_1,\ldots,v_s\) are rationally independent modulo the constant field \(\R\), and for each \(j=1,\ldots,s\),
\[
\frac{\partial\bigl(\exp(v_j)\bigr)}{\exp(v_j)}=\partial v_j.
\]
Moreover, the derivative row matrix \((\partial v_1,\ldots,\partial v_s)\) has rank one over \(\mathcal M(I)\).
If it had rank zero, every \(v_j\) would lie in the constant field \(\R\), contradicting rational independence of \(\{v_1,\ldots,v_s\}\) modulo \(\R\).

Therefore, applying Theorem~\ref{thm:ax-general} with
\(\alpha_j=v_j\) and \(\beta_j=\exp(v_j)\) gives the lower bound
\begin{equation}
\label{eq:lower-trdeg}
\trdeg_{\R}L\geq s+\operatorname{rank}\bigl(\partial v_1,\ldots,\partial v_s\bigr)=s+1.
\end{equation}

\paragraph{Contradiction.}
The bounds \eqref{eq:upper-trdeg} and \eqref{eq:lower-trdeg} contradict each other.
Thus \(|\Crit(F)|\) is finite.
\end{proof}

\section{Application: NPMLEs are finitely supported}
\label{sec:npmle}

Nonparametric maximum likelihood estimation of mixing distributions goes back to Kiefer and Wolfowitz, who established consistency under general regularity conditions without restricting the mixing distribution to a parametric family \cite{kieferWolfowitz1956}.
Lindsay subsequently developed a convex-geometric theory of mixture likelihoods, proving existence under broad conditions and showing that a maximizer can be chosen to have no more support points than there are distinct observations \cite{lindsay1983}.
Lindsay and Roeder further used total positivity to give sufficient conditions for uniqueness of the NPMLE and to relate estimator uniqueness to mixture identifiability \cite{lindsayRoeder1993}.
Subsequent monographs developed the computational and asymptotic theory of nonparametric likelihood estimators in inverse and shape-constrained problems, including interval censoring, deconvolution, density estimation, regression, and mixture models \cite{groeneboom2012information,groeneboom2014nonparametric}.
Here we apply Theorem~\ref{thm:critical-finite} to the NPMLE for the multivariate isotropic Gaussian location-mixture model.

The Gaussian location-mixture model assumes that the observations \(x_1,\ldots,x_n\) are i.i.d.\ draws from the distribution of
\[
X = \Theta + Z,
\]
where \(\Theta\) is a latent location drawn from an unknown mixing distribution \(\pi^*\in\mathcal P(\R^d)\), and \(Z\sim\mathcal N(0,I_d)\) is independent Gaussian noise.

Given a finite dataset \(x_1,\ldots,x_n\), define the average log-likelihood under a candidate mixing distribution \(\pi\) by
\[
\ell_n(\pi)\coloneqq\frac1n\sum_{i=1}^{n}\log p_\pi(x_i),
\]
where \(p_\pi(x)=\int_{\R^d}\varphi_d(x-\theta)\,\pi(d\theta)\) is the mixture density. An NPMLE is any maximizer of this average log-likelihood over all mixing distributions:
\begin{align}
    \label{eq:npmle_definition}
    \widehat\pi\in\argmax_{\pi\in\mathcal P(\R^d)}\frac1n\sum_{i=1}^{n}\log p_\pi(x_i).
\end{align}



Existing results on the Gaussian location-mixture NPMLE fall into three broad directions. Structurally, for general mixture models---not only Gaussian location mixtures---Lindsay established the existence of a discrete NPMLE with at most \(n\) atoms \cite{lindsay1983,lindsayRoeder1993,lindsay1983geometry}. The univariate Gaussian location-mixture NPMLE (\(d=1\)) has additional structure: Lindsay established its uniqueness \cite{lindsay1983geometry}, while Polyanskiy and Wu proved a stronger self-regularization property. Specifically, under a subgaussian true mixing distribution \(\pi^*\), the NPMLE has \(O(\log n)\) support points with high probability, rather than the deterministic upper bound \(n\), and this order is sharp for certain mixtures \cite{polyanskiyWu2020}. Statistically, Saha and Guntuboyina established finite-sample Hellinger-risk bounds for every NPMLE in the multivariate homoscedastic model. Their results yield near-parametric density-estimation rates and near-optimal empirical-Bayes denoising, up to logarithmic factors, for finite mixtures without prior knowledge of the number of components \cite{sahaGuntuboyina2020}. Soloff, Guntuboyina, and Sen extended these guarantees to multivariate heteroscedastic errors, obtaining average-Hellinger bounds, near-optimal denoising oracle inequalities, and adaptive deconvolution guarantees \cite{soloff2025}. In the univariate normal-means model, Chen and Wu derived sharp unregularized regret--Hellinger inequalities and improved NPMLE regret bounds as an empirical Bayes estimator; the compact-support-constrained NPMLE attains the minimax regret rate up to a \(\log\log n\) factor \cite{chen2026sharpregrethellingerboundsgaussian}. Computationally, Yan, Wang, and Rigollet proposed a Wasserstein--Fisher--Rao gradient flow, implemented through particles with alternating location and weight updates, and established convergence guarantees for the associated measure-valued dynamics \cite{yanWangRigollet2024}. For the univariate model, Polyanskiy and Sellke developed certified Wasserstein approximation and finite-time support-size certification, together with almost-sure local linear convergence of EM under suitable generic bounded-data conditions \cite{polyanskiySellke2025}.

In brief, previous work has shown that the NPMLE enjoys near-optimal statistical properties in general dimensions, even with heteroscedastic covariance matrices. In the univariate setting, it also has strong structural properties: the NPMLE is not only unique but often sparse \cite{lindsay1983geometry,polyanskiyWu2020}.
In higher dimensions, however, the NPMLE need not be unique \cite{soloff2025}, and even when the data lies in a bounded domain, the preceding structural sparsity guarantee no longer holds \cite{polyanskiySellke2025}. Previous work has not ruled out an NPMLE with a continuum of support points when \(d\geq2\).

Using Theorem~\ref{thm:critical-finite}, we prove that every finite-sample NPMLE \(\widehat\pi\) is finitely supported.

\maintheoremnpmle*

In the remainder of this section, we sketch the proof of Theorem~\ref{thm:npmle-finite}, prove two standard propositions, and then apply Theorem~\ref{thm:critical-finite} to the Gaussian location-mixture NPMLE.

\paragraph{Sketch of the proof.} Proposition~\ref{prop:likelihood-vector} shows that, for a fixed dataset, all NPMLEs \(\widehat\pi\) have the same fitted likelihood vector. This vector determines a finite Gaussian mixture \(D\), called the dual certificate, whose global maximizers contain the support of every NPMLE. Finally, Theorem~\ref{thm:critical-finite} implies that the set of global maximizers of \(D\) is finite, proving Theorem~\ref{thm:npmle-finite}.

\subsection{Known results for NPMLEs}
In what follows, we fix a dataset \(\{x_1,\ldots,x_n\}\). Propositions~\ref{prop:likelihood-vector} and \ref{prop:dual} are standard results that appear in \cite{polyanskiyWu2020,lindsay1983,lindsayRoeder1993}.
\begin{proposition}[Uniqueness of the fitted likelihood vector]
\label{prop:likelihood-vector}
All NPMLEs \(\widehat\pi\) have the same likelihood vector at the observations,
\[
\widehat{\mathbf p}=(\widehat p_1,\ldots,\widehat p_n),
\qquad
\widehat p_i=p_{\widehat\pi}(x_i).
\]
\end{proposition}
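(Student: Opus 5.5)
The plan is to use strict concavity of the logarithm on the convex set of attainable likelihood vectors. Set
\[
\mathcal K\coloneqq\bigl\{(p_\pi(x_1),\ldots,p_\pi(x_n)):\pi\in\mathcal P(\R^d)\bigr\}\subseteq(0,\infty)^n .
\]
This set is convex, because \(\pi\mapsto p_\pi(x_i)\) is linear in \(\pi\): for mixing distributions \(\pi_0,\pi_1\) and \(\lambda\in[0,1]\), the mixture \(\pi_\lambda=(1-\lambda)\pi_0+\lambda\pi_1\) is again a probability measure, and
\[
p_{\pi_\lambda}(x_i)=(1-\lambda)p_{\pi_0}(x_i)+\lambda p_{\pi_1}(x_i)\qquad\text{for every } i.
\]
Each entry is strictly positive, since \(\varphi_d>0\). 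The average log-likelihood is \(\ell_n(\pi)=\Psi(\mathbf p_\pi)\), where
\[
\Psi(\mathbf p)=\frac1n\sum_{i=1}^n\log p_i .
\]
The function \(\Psi\) is strictly concave on \((0,\infty)^n\), being a sum of strictly concave functions of distinct coordinates.

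Now suppose \(\widehat\pi_0\) and \(\widehat\pi_1\) are two NPMLEs with likelihood vectors \(\mathbf p^{(0)}\ne\mathbf p^{(1)}\), and write \(\ell^*\) for the common maximal value. The midpoint \(\pi_{1/2}=\tfrac12(\widehat\pi_0+\widehat\pi_1)\) has likelihood vector \(\tfrac12(\mathbf p^{(0)}+\mathbf p^{(1)})\). Since the vectors differ in at least one coordinate \(i\), strict concavity of \(\log\) in that coordinate, together with ordinary concavity in the other coordinates, gives
\[
\ell_n(\pi_{1/2})>\tfrac12\ell^*+\tfrac12\ell^*=\ell^*.
\]
This contradicts the maximality of \(\ell^*\). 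Hence all NPMLEs share the same vector \(\widehat{\mathbf p}\).

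The only point needing care is that \(\ell^*\) is finite and attained, so that the argument is not vacuous. Finiteness is immediate: \(p_\pi(x_i)\le(2\pi)^{-d/2}\), so \(\ell_n\) is bounded above, and \(\ell_n(\delta_0)>-\infty\). The proposition itself only quantifies over maximizers, so if none existed it would hold vacuously. I therefore expect no real obstacle here. Existence, which is needed later, is the standard result of Lindsay \cite{lindsay1983} and can simply be cited.
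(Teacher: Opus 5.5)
Your proof is correct and follows the paper's argument: the set of attainable likelihood vectors is convex because \(\pi\mapsto p_\pi(x_i)\) is linear, and the averaged logarithm is strictly concave on \((0,\infty)^n\), so the maximizing vector is unique. The only difference is that you spell out the midpoint contradiction and check that \(\ell^*\) is finite, both of which the paper leaves implicit.
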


\begin{proof}
For a probability measure \(\pi\), write
\[
\mathbf p(\pi)=\bigl(p_\pi(x_1),\ldots,p_\pi(x_n)\bigr)\in(0,\infty)^n.
\]
The feasible set of likelihood vectors is convex because \(\pi\mapsto\mathbf p(\pi)\) is linear.
The function
\[
(p_1,\ldots,p_n)\longmapsto \frac1n\sum_{i=1}^{n}\log p_i
\]
is strictly concave on \((0,\infty)^n\). Strict concavity therefore implies that the maximizer over the convex set \(\{\mathbf p(\pi):\pi\in\mathcal P(\R^d)\}\) is unique.
\end{proof}

The unique fitted likelihood vector determines a canonical first-order certificate that simultaneously controls the support of every optimizer.

\begin{proposition}[Dual certificate and support containment]
\label{prop:dual}
Define the dual certificate
\begin{equation}
\label{eq:dual}
D(\theta)
=\frac1n\sum_{i=1}^{n}\frac{\varphi_d(\theta-x_i)}{\widehat p_i}.
\end{equation}
Then \(D(\theta)\leq1\) for every \(\theta\in\R^d\).
Moreover, the support of every NPMLE \(\widehat\pi\) is contained in the set of global maximizers of \(D\):
\begin{equation}
\label{eq:candidate-set}
E=\{\theta\in\R^d:D(\theta)=1\}
=\argmax_{\theta\in\R^d}D(\theta).
\end{equation}

\end{proposition}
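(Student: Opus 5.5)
The plan is to use the standard first-order optimality (KKT) argument for maximizing a concave functional over the convex set \(\mathcal P(\R^d)\). We take as given the existence of an NPMLE \(\widehat\pi\), which is classical (Lindsay). By Proposition~\ref{prop:likelihood-vector}, its fitted vector \(\widehat{\mathbf p}\) does not depend on the choice of \(\widehat\pi\), so \(D\) is well defined. Also \(\widehat p_i>0\), since otherwise \(\ell_n(\widehat\pi)=-\infty\) while any point mass gives a finite value.

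\emph{Step 1: \(D\le 1\).} Fix \(\theta\in\R^d\) and \(\varepsilon\in(0,1)\), and perturb to \(\pi_\varepsilon=(1-\varepsilon)\widehat\pi+\varepsilon\delta_\theta\). Then
\[
p_{\pi_\varepsilon}(x_i)=(1-\varepsilon)\widehat p_i+\varepsilon\varphi_d(x_i-\theta).
\]
Optimality gives \(\ell_n(\pi_\varepsilon)\le\ell_n(\widehat\pi)\). The function \(\varepsilon\mapsto\ell_n(\pi_\varepsilon)\) is differentiable at \(0^+\), and its right derivative there is
\[
\frac1n\sum_{i=1}^n\frac{\varphi_d(x_i-\theta)-\widehat p_i}{\widehat p_i}=D(\theta)-1.
\]
This derivative must be \(\le 0\), so \(D(\theta)\le 1\).

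\emph{Step 2: support containment.} Integrate \(D\) against \(\widehat\pi\) and use Fubini:
\[
\int D\,d\widehat\pi=\frac1n\sum_{i=1}^n\frac{\int\varphi_d(\theta-x_i)\,\widehat\pi(d\theta)}{\widehat p_i}=\frac1n\sum_{i=1}^n 1=1.
\]
Since \(D\le1\) everywhere by Step 1, we get \(1-D\ge0\) with \(\int(1-D)\,d\widehat\pi=0\). Hence \(D=1\) holds \(\widehat\pi\)-almost everywhere.

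\emph{Step 3: from almost everywhere to the support.} The function \(D\) is continuous, so \(E=\{D=1\}\) is closed. Since \(\widehat\pi(E)=1\), and the support of \(\widehat\pi\) is the smallest closed set of full measure, we conclude \(\supp\widehat\pi\subseteq E\). Because \(D\le1\) everywhere, \(E\) is exactly \(\argmax D\).

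The only delicate point is justifying the one-sided derivative in Step 1, which needs \(\widehat p_i>0\) and a finite sum. Both are immediate here, so I expect no real obstacle.
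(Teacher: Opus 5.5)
Your proposal is correct and follows the paper's proof essentially step for step: the same perturbation $\pi_\varepsilon=(1-\varepsilon)\widehat\pi+\varepsilon\delta_\theta$ gives the right derivative $D(\theta)-1\le 0$, and the identity $\int D\,d\widehat\pi=1$ forces $D=1$ $\widehat\pi$-almost surely, after which continuity of $D$ passes this to the support. Your extra details (positivity of $\widehat p_i$, Fubini, and $\widehat\pi(E)=1$ making $E$ nonempty so that $E$ really equals $\argmax D$) only make explicit what the paper leaves implicit.
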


\begin{proof}
Fix an NPMLE \(\widehat\pi\) and \(\theta\in\R^d\).
For \(0\leq\varepsilon\leq1\), consider the feasible perturbation
\[
\pi_\varepsilon=(1-\varepsilon)\widehat\pi+\varepsilon\delta_\theta.
\]
Taking the right derivative at \(\varepsilon=0\) gives
\[
\left.\frac{d}{d\varepsilon}\ell_n(\pi_\varepsilon)\right|_{\varepsilon=0+}
=\frac1n\sum_{i=1}^{n}\frac{\varphi_d(\theta-x_i)-\widehat p_i}{\widehat p_i}
=D(\theta)-1.
\]
Optimality implies that \(D(\theta)\leq1\) for every \(\theta\in\R^d\).

On the other hand,
\[
\int_{\R^d}D(\theta)\,\widehat\pi(d\theta)
=\frac1n\sum_{i=1}^{n}
\frac{\int\varphi_d(\theta-x_i)\,\widehat\pi(d\theta)}{\widehat p_i}
=1.
\]
Hence \(D=1\) \(\widehat\pi\)-almost surely.
Since \(D\) is continuous, \(\supp(\widehat\pi)\subset E\).
\end{proof}

\subsection{Proof of Theorem~\ref{thm:npmle-finite}}

We can now prove Theorem~\ref{thm:npmle-finite}.

\begin{proof}[Proof of Theorem~\ref{thm:npmle-finite}]
Given the dataset \(\{x_i\}_{i=1}^n\), Proposition~\ref{prop:likelihood-vector} provides a unique fitted likelihood vector \(\widehat{\mathbf p}\) and hence the canonical dual certificate \(D\) in \eqref{eq:dual}.
The certificate \(D\) is a finite isotropic Gaussian location mixture with locations \(x_i\) and strictly positive weights \(w_i=1/(n\widehat p_i)\).
By Theorem~\ref{thm:critical-finite}, its critical set \(\Crit(D)\) is finite.
Every global maximizer of the differentiable function \(D\) is critical, so the canonical set \(E\) in \eqref{eq:candidate-set} is finite.
Proposition~\ref{prop:dual} places the support of every NPMLE \(\widehat\pi\) in this same finite set.
\end{proof}

\section{Discussion}
\label{sec:discussion}

Theorem~\ref{thm:critical-finite} rules out infinite critical sets, and therefore in particular positive-dimensional ones, for every finite isotropic Gaussian location mixture considered in this paper. It also supplies the finiteness hypothesis under which the general mode-counting estimates of \cite{amendola2020,nguyen2026} apply, making their conditional mode bounds unconditional within this homoscedastic class.

\paragraph{Quantitative mode counting.}
The functional-transcendence argument used in this paper is qualitative and does not itself provide an effective estimate for \(\lvert\Crit(F)\rvert\) or for the number of modes. Moreover, the available upper bounds on the numbers of modes and critical points in \cite{amendola2020,nguyen2026} are currently far from sharp. For a three-component homoscedastic mixture, the general bound in \cite{nguyen2026}, once finiteness is established, gives at most \(72\) modes, whereas the specialized analysis in \cite{okunoKabata2026} gives at most \(15\) critical points and at most \(8\) modes.

More specifically, consider the following quantitative mode-counting question.
\begin{question}
    For each \((n,d)\), what are the sharp upper bounds on the numbers of critical points, modes, and global modes of an \(n\)-component isotropic Gaussian mixture in dimension \(d\)?
\end{question}
To the best of the author's knowledge, no conjecture currently addresses these sharp bounds.

For heteroscedastic Gaussian mixtures, Conjecture~5 in \cite{amendola2020}, originating at the 2011 AIM Workshop on Singular Learning Theory, predicted that the largest possible number of modes of a \(d\)-dimensional \(n\)-component mixture is
\[
\binom{d+n-1}{d}. 
\]
But this conjecture was recently disproved by a heteroscedastic three-component mixture in dimension two that violates the proposed bound \cite{kabata2026sevenmodesheteroscedasticthreecomponent}.
In summary, to the best of the author's knowledge, no conjecture currently predicts the sharp mode counting for general \((n,d)\) in either the homoscedastic or heteroscedastic setting.

\paragraph{Possible theoretical extensions.}
A natural analytic extension is to establish an analogue of Theorem~\ref{thm:critical-finite} for finite heteroscedastic Gaussian mixtures with component-specific positive-definite covariance matrices. An affirmative result would also extend the finite-support conclusion to heteroscedastic Gaussian NPMLEs through their corresponding dual certificates \cite{soloff2025}.

\paragraph{Possible algorithmic extensions for NPMLE computation.}
These results suggest a concrete route toward a principled algorithm for computing multivariate NPMLEs. Our finite-criticality theorem guarantees that the dual certificate has only finitely many critical points, so its global maximizers form a finite set of candidate atom locations. The Wasserstein--Fisher--Rao method of \cite{yanWangRigollet2024} provides a practical interacting-particle scheme that updates both particle locations and weights, while the one-dimensional framework of \cite{polyanskiySellke2025} shows how numerical approximations can be accompanied by certificates for Wasserstein error and the exact number of atoms. Combining these ingredients could yield a multivariate algorithm that uses particle dynamics to locate candidate atoms and dual-certificate tests to add, remove, and certify them, with the goal of achieving principled, efficient, and accurate NPMLE computation. Establishing such computational and convergence guarantees remains an important direction for future work.

\paragraph{Open problems for NPMLE.}

Theorem~\ref{thm:npmle-finite} establishes that every finite-sample isotropic Gaussian location-mixture NPMLE is finitely supported, but it neither implies uniqueness nor, by itself, provides a useful sparsity bound. Uniqueness and sparsity are two principal structural features of the univariate NPMLE.
Adversarial datasets have been constructed in \(d\geq2\) for which the NPMLE can be nonunique and can have large support \cite{soloff2025,polyanskiySellke2025}.
However, these constructions use deterministic adversarial datasets rather than typical random samples from a Gaussian mixture model.
It is therefore natural to ask whether, when the data are drawn from a Gaussian mixture, the multivariate NPMLE enjoys the same structural properties as its univariate counterpart \cite{polyanskiyWu2020}. Is the NPMLE almost surely unique? Is it sparse with high probability?

\section*{Acknowledgments}

The author is grateful to his Ph.D. advisor, Yihong Wu, for suggesting the problem studied in this paper, for many useful discussions, and for feedback on the manuscript.

\section*{Declaration of AI use}

AI assistance was used in developing and drafting the proof of Theorem~\ref{thm:critical-finite},
as well as in editing the exposition of this paper.
In particular, the main proof idea for Theorem~\ref{thm:critical-finite}, which combines analytic curve selection with Ax's functional-transcendence theorem, was discovered through the author's interactions with GPT-5.6 Sol Pro via OpenAI's web interface.
All mathematical arguments, statements, and final text were reviewed, verified, simplified, edited, and approved by the author, who takes full responsibility for the contents of the paper.

\bibliographystyle{plainnat}
\bibliography{main_arxiv}

\end{document}